\title{Hypergraph independence polynomials with a zero close to the origin}
\author{Shengtong Zhang}
\address{Department of Mathematics, Stanford University, Stanford, CA 94305, USA}
\email{stzh1555@stanford.edu}
\date{December 2024}
\begin{document}

\maketitle

\begin{abstract}
For each uniformity $k \geq 3$, we construct $k$-uniform linear hypergraphs $G$ with arbitrarily large maximum degree $\Delta$ whose independence polynomial $Z_G$ has a zero $\lambda$ with $\abs{\lambda} = O\left(\frac{\log \Delta}{\Delta}\right)$. This disproves a recent conjecture of Galvin, McKinley, Perkins, Sarantis, and Tetali.
\end{abstract}

\section{Introduction}
A \textbf{hypergraph} $G = (V, E)$ is a set of vertices $V$ together with a set of edges $E \subset 2^V$. A hypergraph is \textbf{$k$-uniform} if every edge has size $k$. The degree of a vertex $v \in V$, denoted by $d(v)$, is the number of edges it appears in; in a hypergraph with maximum degree $\Delta$, each vertex appears in at most $\Delta$ edges.

An \textbf{independent set} in $G$ is a set of vertices $I \subset V$ such that $I$ contains no edge. Let $\cI(G)$ denote the family of all independent sets in $G$. The \textbf{independence polynomial} of $G$ is defined by
$$Z_G(\lambda) = \sum_{I \in \cI(G)} \lambda^{\abs{I}}.$$
This polynomial plays an important role in mathematics, physics and computer science \cite{DP21, HL70, SSPV22, SS05, S85, S10, W06}. A key property for understanding this polynomial is the largest radius of a disk-shaped \textbf{zero-free region(ZFR)}, a region in $\CC$ where $Z_G$ has no zero. We refer the reader to the introduction of \cite{GMPST22} for a survey of how knowledge of the zeros of $Z_G$ can lead to interesting results about independent set.

When $G$ is a graph with maximum degree $\Delta$, the ZFRs for $Z_G$ are well-understood \cite{PR18, S85}. Specifically, Shearer \cite{S85} showed that $Z_G$ has no zero inside the disk $\abs{\lambda} < \frac{(\Delta - 1)^{\Delta - 1}}{\Delta^{\Delta}}$, and this bound is the best possible.

In a recent paper \cite{GMPST22}, Galvin, McKinley, Perkins, Sarantis and Tetali studied the zeros of $Z_G$ when $G$ is a general hypergraph with given maximum degree $\Delta$. They showed that $Z_G$ has no zero inside the disk $\abs{\lambda} < \frac{\Delta^\Delta}{(\Delta + 1)^{(\Delta + 1)}}$. Furthermore, for each uniformity $k \geq 2$, they constructed a family of $k$-uniform hypergraphs with arbitrarily large maximum degree $\Delta$ such that $Z_G$ has a zero $\lambda$ with $\abs{\lambda} < O_k\left(\frac{\log \Delta}{\Delta}\right)$, thereby showing that their bound is tight up to a logarithmic factor if we place no additional assumption on $G$.

A hypergraph is \textbf{linear} if each pair of edges intersect in at most one vertex. The aforementioned constructions in \cite[Section 4]{GMPST22} are far from linear, since they contain edges that intersect in $(k - 1)$ vertices. In \cite[Conjecture 3]{GMPST22}, Galvin, McKinley, Perkins, Sarantis and Tetali conjectured that their lower bound on the maximum radius of the zero-free disk for $Z_G$ can be improved under the additional assumption that $G$ is linear. 
\begin{conjecture}
\label{conj:main}
For each $k \geq 2$, there exists a constant $C_k > 0$ such that the following is true. If $G$ is
a $k$-uniform, linear hypergraph with maximum degree $\Delta$ and if
$$\abs{\lambda} \leq C_k \Delta^{-\frac{1}{k - 1}}$$
then $Z_G(\lambda) \neq 0$.
\end{conjecture}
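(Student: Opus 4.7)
The plan is to adapt Shearer's inductive argument \cite{S85} for graphs to linear hypergraphs, exploiting linearity to obtain the sharper $\Delta^{-1/(k-1)}$ scaling. For $v \in V(G)$, let $G - v$ denote the hypergraph on $V \setminus \{v\}$ obtained by deleting $v$ and every edge through $v$, and let $G/v$ denote $G - v$ together with the sets $e \setminus \{v\}$ (for each $e \ni v$) added as new $(k-1)$-edges. Splitting each independent set according to whether it contains $v$ gives the identity
\[
Z_G(\lambda) = Z_{G-v}(\lambda) + \lambda\, Z_{G/v}(\lambda) = Z_{G-v}(\lambda)\bigl(1 + \rho_v(\lambda)\bigr), \quad \rho_v(\lambda) := \lambda\,\frac{Z_{G/v}(\lambda)}{Z_{G-v}(\lambda)}.
\]
Iterating along a vertex-elimination order reduces the goal $Z_G(\lambda) \neq 0$ to maintaining a quantitative bound $|\rho_v(\lambda)| \leq 1 - \varepsilon$ throughout the recursion, uniformly over all the smaller linear hypergraphs that appear (some of whose edges will have size $<k$).

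The engine of the induction is a recursive bound on $\rho_v$ into which linearity enters. Since $G$ is linear, the $(k-1)$-sets $\{e \setminus \{v\} : e \ni v\}$ are pairwise disjoint and each meets any edge of $G - v$ in at most one vertex. On a linear hyper-tree rooted at $v$, the sub-hypergraph $G - v$ decomposes into disjoint branches $T_e$ (one per edge $e \ni v$), the hard-core measure factorises over them, and one obtains the clean product formula
\[
\rho_v(\lambda) = \lambda \prod_{e \ni v}\bigl(1 - P_e(\lambda)\bigr), \qquad P_e(\lambda) := \frac{1}{Z_{T_e}(\lambda)}\sum_{\substack{I \in \cI(T_e) \\ e \setminus \{v\} \subseteq I}} \lambda^{|I|},
\]
where $P_e$ is the joint occupation ratio of the $k-1$ non-root vertices of $e$. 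An inductive hypothesis $|\rho_u| \leq r$ at the children of $v$ should propagate to $|P_e| \lesssim r^{k-1}$ (morally, the product of $k-1$ single-vertex ratios), yielding $|\rho_v| \leq |\lambda|\exp(O(\Delta r^{k-1}))$. Balancing $r \asymp |\lambda|$ forces precisely the threshold $|\lambda| \leq C_k \Delta^{-1/(k-1)}$ for some $C_k = C_k(k) > 0$, closing the induction in the hyper-tree case.

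The main obstacle is complex-analytic. Because $\lambda$ is complex, the ratios $\rho_v$ and $P_e$ are complex, and the factor $|1 - P_e|$ in the product can exceed $1$ even when $|P_e|$ is small; a bare modulus bound $|\rho_v| \leq r$ is therefore not invariant under the $\Delta$-fold product. The standard remedy (already needed for Shearer's graph bound) is to identify a forward-invariant disk $D(c, r) \subset \CC$ for the recursion map, with centre $c$ and radius $r$ tuned so that if $\rho_u \in D$ for every child $u$ of $v$, then $\rho_v \in D$, and with $-1 \notin D$. Locating such a $D$ in the hyper-tree case, with $r$ on the correct scale $\Delta^{-1/(k-1)}$, is the combinatorial-analytic heart of the argument. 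A second obstacle is the passage from the hyper-tree case to general linear $G$: there the branches of the elimination are no longer disjoint in $G - v$, so the clean product identity above is only approximate and must be supplemented by a Shearer-type monotonicity/extremality argument showing that, among all linear hypergraphs of maximum degree $\Delta$, the hyper-tree is the worst case for $|\rho_v|$.
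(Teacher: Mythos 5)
The statement you are trying to prove is the conjecture that this paper \emph{disproves}: \cref{thm:main} exhibits, for every $k \geq 3$ and all large $\Delta$, a $k$-uniform linear hypergraph $G$ of maximum degree $\Delta$ whose independence polynomial has a (real, negative) root $\lambda$ with $\abs{\lambda} \leq \frac{6k\log\Delta}{\Delta}$, which lies well inside any disk of radius $C_k \Delta^{-1/(k-1)}$ once $\Delta$ is large. So for $k \geq 3$ no argument along your lines can close; the conjecture is only verified for hypertrees (as in \cite{GMPST22}) and for $k=2$, where it is Shearer's theorem \cite{S85}. The counterexample is simple enough to pinpoint which of your steps break. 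Take $H$ a $\Delta$-regular $(k-1)$-uniform linear hypergraph on $n = O(k\Delta)$ vertices (built from arithmetic progressions over $\ZZ_p$), and let $G = S_H$ be obtained by adding to each edge $e$ of $H$ a new pendant vertex. Then $G$ is $k$-uniform, linear, of maximum degree $\Delta$, and
$$Z_{G}(\lambda) = \sum_{S \subset V(H)} \lambda^{\abs{V(H)} - \abs{S}} (1 + \lambda)^{e(S)},$$
where $e(S)$ counts edges of $H$ meeting $S$. Since $H$ is dense, $e(S) \geq \frac{\Delta-1}{k-1}\abs{S}$ for every $S$, and already at $\lambda_0 = -\Theta\bigl(\frac{k\log\Delta}{\Delta}\bigr)$ the factor $(1+\lambda_0)^{e(S)}$ crushes every term with $S \neq \emptyset$, so the sign of $Z_G(\lambda_0)$ is that of $\lambda_0^{\abs{V(H)}}$, which is negative for odd $\abs{V(H)}$; a root in $[\lambda_0, 0]$ follows.

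Concretely, two of your claimed steps are the ones that fail. First, the propagation ``$\abs{\rho_u} \leq r$ at the children implies $\abs{P_e} \lesssim r^{k-1}$'' is exactly where the $\Delta^{-1/(k-1)}$ scale is smuggled in, and it is false outside the hypertree setting: the joint occupation ratio of the $k-1$ non-root vertices of an edge does not factor into single-vertex ratios when those vertices sit inside a dense linear hypergraph, and linearity (which only limits pairwise edge intersections) gives no such decoupling. Second, and decisively, the proposed Shearer-type extremality statement ``among all linear hypergraphs of maximum degree $\Delta$ the hypertree is the worst case for $\abs{\rho_v}$'' is refuted by the construction above: the conjecture holds for hypertrees but fails for $S_H$, so hypertrees are in fact far from extremal. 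Any correct account of this statement for $k \geq 3$ must be a disproof, along the lines of \cref{lem:hypergraph-ZFR} and \cref{lem:hypergraph-construction}.
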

This conjecture is motivated by results on asymptotic enumeration in \cite{MNPS20}. Galvin, McKinley, Perkins, Sarantis and Tetali verified this conjecture when $G$ is a hypertree \cite[Theorem 4]{GMPST22}, 

In this note, we disprove \cref{conj:main} in any uniformity $k \geq 3$. Our counterexample shows that the radius of the disk-shaped ZFR $\abs{\lambda} < \frac{\Delta^\Delta}{(\Delta + 1)^{(\Delta + 1)}}$ is tight up to a logarithmic factor even if we assume that $G$ is a $k$-uniform linear hypergraph.
\begin{theorem}
    \label{thm:main}
    For each uniformity $k \geq 3$ and $\Delta > 100k^2$, there exists a $k$-uniform linear hypergraph $G$ with maximum degree $\Delta$, such that $Z_{G}$ has a negative real zero $\lambda$ with $\lambda \in [-\frac{6k \log \Delta}{\Delta}, 0]$.
\end{theorem}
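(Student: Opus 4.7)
\bigskip

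My strategy would be to mimic, inside the class of linear hypergraphs, the non-linear ``tight sunflower'' construction from \cite{GMPST22}. Recall that the sunflower with core $\{c_1,\ldots,c_{k-1}\}$ and $\Delta$ petals $w_1,\ldots,w_\Delta$ (edges $\{c_1,\ldots,c_{k-1},w_i\}$) has
\[
Z(\lambda)=\lambda^{k-1}+\bigl((1+\lambda)^{k-1}-\lambda^{k-1}\bigr)(1+\lambda)^\Delta,
\]
and the balance $(1+\lambda)^\Delta\asymp -\lambda^{k-1}$ yields a root with $|\lambda|\asymp (k-1)\log\Delta/\Delta$. The point is that the ``core fully in $I$'' case contributes a \emph{standalone} polynomial correction of constant degree $k-1$, while the ``core partial'' case contributes $(1+\lambda)^\Delta$ worth of free petals. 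I would aim to reproduce this decomposition with a linear hypergraph, so that the proof divides into (i) a combinatorial construction, (ii) an explicit evaluation $Z_G(\lambda)=A(\lambda)+B(\lambda)(1+\lambda)^{M}$ with $\deg A, \deg B=O(k)$ and $M=\Theta(\Delta)$, and (iii) a complex-analytic root-finding step.

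For the construction, the natural first attempt is to take a single ``core gadget'' on $O(k)$ vertices together with $\Delta$ pendant petal-edges, each petal sharing exactly one vertex with the core gadget to preserve linearity; the core gadget itself would be a small linear $k$-uniform hypergraph engineered so that ``all of its vertices lie in $I$'' forces every petal vertex out of $I$. After splitting the partition function on the state of the core vertices, one obtains a sum of the form $\sum_{S}\lambda^{|S|}F_S(\lambda)^\Delta$ where $F_S$ is the contribution of a single petal given core state $S$. The ``critical'' term is the one coming from $S$ equal to the full core, whose petal factor should collapse to $1$ (so that it contributes $\lambda^{|S|}$ standalone), whereas the other terms should give $F_S(\lambda)\approx 1+\lambda$, contributing a clean $(1+\lambda)^\Delta$.

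Given such a factorization, the root-finding is straightforward: dividing through by $(1+\lambda)^M$ and taking logarithms transforms $Z_G(\lambda)=0$ into $M\log(1+\lambda)=\log(-A/B)+2\pi i n$. Choosing $\lambda=r e^{i\theta}$ with $\theta$ close to $\pi$ and extracting real/imaginary parts yields $Mr\approx (k-1)\log(1/r)+O(k)$, and hence a root with $r\le 6k\log\Delta/\Delta$ once $\Delta>100k^2$; a Rouch\'e argument applied on a thin annulus around this candidate radius can make the existence rigorous.

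The main obstacle, and the one I would spend the bulk of my effort on, is step (i)--(ii): ensuring that the ``core fully in $I$'' scenario produces a \emph{global} $\lambda^{O(k)}$ correction rather than a \emph{per-edge} one. Every naive linear analogue I can think of (a single star, a blown-up complete graph $K_n$, a ``bipartite'' core of a constant number of high-degree vertices) ends up giving a per-edge correction of the form $(1-\lambda^{k-1}/(1+\lambda)^{k-1})^\Delta$, so that the resulting balance is $\Delta\lambda^{k-1}\asymp\log(1/\lambda)$ and one only recovers $|\lambda|\asymp (\log\Delta/\Delta)^{1/(k-1)}$, which is \emph{inside} the conjectured disk only by a logarithmic factor and does not reach the $\log\Delta/\Delta$ scale claimed by the theorem. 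The crucial trick, which I would look for, is a linear gadget in which one constant-size ``triggering'' event simultaneously kills the contributions of all $\Delta$ petals at once—this is precisely the feature that linearity seems to forbid, and finding a gadget that circumvents the obstruction is the heart of the argument.
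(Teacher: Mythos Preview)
Your proposal stops short of a proof: you correctly isolate the obstacle---a constant-size linear core cannot simultaneously disable $\Delta$ petals---but you do not resolve it, and within the framework you set up it cannot be resolved. If a set of $O(k)$ core vertices meets $\Delta$ linear $k$-edges, some core vertex has degree $\Theta(\Delta/k)$; by linearity those edges are pairwise disjoint away from that vertex, so each contributes an independent per-edge factor conditioned on that single vertex's state, and you are back to the $(\log\Delta/\Delta)^{1/(k-1)}$ regime you already diagnosed. The ``constant-size triggering event'' you are searching for does not exist.

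The paper's way out is to drop the constant-size requirement on the core. Take $V$ to be the vertex set of a $(k-1)$-uniform linear hypergraph $H$ on $n=\Theta(k\Delta)$ vertices with minimum degree $\approx\Delta$ (an explicit arithmetic construction suffices), and attach one new petal-vertex $x_e$ for each edge $e\in E(H)$ via the single $k$-edge $e\cup\{x_e\}$; the result is $k$-uniform, linear, with maximum degree $\Delta$. Splitting on the set $S\subseteq V$ of core vertices \emph{not} in $I$ gives
\[
Z(\lambda)=\sum_{S\subseteq V}\lambda^{\,n-|S|}(1+\lambda)^{e(S)},
\]
where $e(S)$ counts $H$-edges meeting $S$, so that $e(S)\ge\alpha|S|$ with $\alpha=(\Delta-1)/(k-1)$. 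At the real point $\lambda_0=-3\log n/\alpha$ one has $|\lambda_0^{-|S|}(1+\lambda_0)^{e(S)}|\le(\alpha/n^{3})^{|S|}$, so the sum over $S\ne\emptyset$ is below $1$ in absolute value and the term $\lambda_0^{\,n}$ dominates; choosing $n$ odd forces $Z(\lambda_0)<0<Z(0)$, and the intermediate value theorem produces a real root with $|\lambda|\le 3\log n/\alpha\le 6k\log\Delta/\Delta$. Thus the ``standalone'' term is allowed to have huge degree $n=\Theta(k\Delta)$ rather than $O(k)$; what replaces your petal factor $(1+\lambda)^{\Delta}$ is the guarantee that each core vertex removed frees at least $\alpha\approx\Delta/k$ petals, and no Rouch\'e argument is needed.
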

\section{The Counterexample}
We begin by describing a general construction.
\begin{definition}
Let $G = (V, E)$ be a hypergraph. We define $S_G$ as the hypergraph whose vertex set is $V \sqcup E$, and whose edge set is $\{e \cup\{e\}: e \in E\}$.
\end{definition}
\usetikzlibrary{topaths,calc}
\begin{figure}[h]
\centering
\begin{subfigure}[b]{0.45\linewidth}
\centering
\begin{tikzpicture}[label distance=0.2mm]
    \node (v1) at (2,3.5) {};
    \node (v2) at (0,0) {};
    \node (v3) at (4,0) {};
    \foreach \v in {1,2,3} {
        \fill (v\v) circle (0.1);
    };
    \draw (v1) -- (v2) node[midway,label=above left:$e_1$] {};
    \draw (v1) -- (v3) node[midway,label=above right:$e_2$] {};
    \draw (v2) -- (v3) node[midway,label=below:$e_3$] {};
    \fill (v1) circle (0.1) node [label=above:$v_1$] {};
    \fill (v2) circle (0.1) node [label=below:$v_2$] {};
    \fill (v3) circle (0.1) node [label=below:$v_3$] {};
\end{tikzpicture}
\caption{A graph $G$.}
\end{subfigure}
\begin{subfigure}[b]{0.45\linewidth}
\centering
\begin{tikzpicture}[scale=0.50]
    \node (v1) at (2,3.5) {};
    \node (v2) at (0,0) {};
    \node (v3) at (4,0) {};
    \node (v4) at (-2, -3.5) {};
    \node (v5) at (8,0) {};
    \node (v6) at (6, -3.5) {};

    \begin{scope}[fill opacity=0.8]
    \filldraw[fill=white!0,rounded corners=0.25cm,rotate=60] ($(v4)+(-0.5,-0.5)$) rectangle ($(v1)+(0.5,0.5)$);
    \filldraw[fill=white!0,rounded corners=0.25cm,rotate=120] ($(v6)+(-0.5,-0.5)$) rectangle ($(v1)+(0.5,0.5)$);
    \filldraw[fill=white!0,rounded corners=0.25cm] ($(v2)+(-0.5,-0.5)$) rectangle ($(v5)+(0.5,0.5)$);
    \end{scope}

    \foreach \v in {1,2,...,6} {
        \fill (v\v) circle (0.1);
    }

    \fill (v1) circle (0.1) node [label={[label distance=-1.5mm]-90:$v_1$}] {};
    \fill (v2) circle (0.1) node [right] {$v_2$};
    \fill (v3) circle (0.1) node [right] {$v_3$};
    \fill (v4) circle (0.1) node [label={[label distance=-1.5mm]90:$e_1$}] {};
    \fill (v5) circle (0.1) node [left] {$e_3$};
    \fill (v6) circle (0.1) node [above] {$e_2$};
\end{tikzpicture}
\caption{The $3$-uniform hypergraph $S_G$.}
\end{subfigure}
\end{figure}
We begin by showing a basic property of $S_G$.
\begin{proposition}
\label{prop:basic}
If $G = (V, E)$ is a $(k - 1)$-uniform linear hypergraph with maximum degree $\Delta$, then $S_G$ is a $k$-uniform linear hypergraph with maximum degree $\Delta$. 
\end{proposition}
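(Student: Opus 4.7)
The proof should be a direct verification of the three assertions (uniformity, linearity, degree bound) from the explicit description of $S_G$. None of the steps look subtle, so the plan is really just to check each property in turn.

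First, for uniformity: every edge of $S_G$ has the form $e \cup \{e\}$ where $e \in E$. Since $G$ is $(k-1)$-uniform, $\abs{e} = k-1$, and the new vertex $\{e\}$ is disjoint from $V$ by construction (we are taking $V \sqcup E$), so $\abs{e \cup \{e\}} = k$. That gives $k$-uniformity immediately.

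Second, for linearity: I would take two distinct edges $e_1 \cup \{e_1\}$ and $e_2 \cup \{e_2\}$ of $S_G$ with $e_1 \neq e_2$ in $E$, and compute their intersection. The ``new'' vertices $\{e_1\}$ and $\{e_2\}$ are distinct and each lies in only one of the two edges, so they contribute nothing to the intersection. The remaining intersection is $e_1 \cap e_2 \subset V$, and this has size at most $1$ because $G$ is linear. Hence $S_G$ is linear.

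Third, for the degree bound: I would split into cases according to whether the vertex in question lies in $V$ or in $E$. For $v \in V$, the edges of $S_G$ containing $v$ are in bijection with the edges $e \in E$ containing $v$, so $\deg_{S_G}(v) = \deg_G(v) \leq \Delta$. For $e \in E$ viewed as a vertex of $S_G$, it lies in the unique edge $e \cup \{e\}$, so it has degree $1 \leq \Delta$ (assuming $\Delta \geq 1$; the case $\Delta = 0$ is trivial since then $E = \emptyset$). Thus the maximum degree of $S_G$ is exactly $\Delta$.

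There is no real obstacle here; the only thing to be slightly careful about is remembering that the elements of $E$ are being used as fresh vertices in $S_G$ (which is why the disjoint union notation $V \sqcup E$ is used), so the new vertex $\{e\}$ cannot collide with anything in $V$ or with $\{e'\}$ for $e' \neq e$. Once that is acknowledged, the three properties follow by inspection.
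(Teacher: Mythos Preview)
Your proposal is correct and matches the paper's own proof essentially line for line: both verify $k$-uniformity via $\abs{e \cup \{e\}} = \abs{e}+1$, linearity via $(e_1 \cup \{e_1\}) \cap (e_2 \cup \{e_2\}) = e_1 \cap e_2$, and the degree bound by splitting into the cases $v \in V$ and $v \in E$. Your write-up is slightly more explicit about why the new vertices $\{e\}$ do not collide with anything, but the argument is the same.
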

\begin{proof}
Each edge in $S_G$ has the form $e \cup\{e\}$ for some $e \in E$, and $\abs{e \cup\{e\}} = \abs{e} + 1 = k$. So $S_G$ is $k$-uniform. 

For any pair of distinct edges $e_1 \cup \{e_1\}$ and $e_2 \cup \{e_2\}$ in $S_G$, we have
$$\abs{(e_1 \cup \{e_1\}) \cap (e_2 \cup \{e_2\}) } = \abs{e_1 \cap e_2} \leq 1.$$
Thus $S_G$ is linear.

Finally, each vertex in $V$ has the same degree in $G$ and $S_G$, while each element in $E$ has degree $1$ in $S_G$. Therefore, the maximum degree of $S_G$ is the same as the maximum degree of $G$.
\end{proof}
In the next two lemmas, we give an explicit formula for the independence polynomial $Z_{S_G}$ of $S_G$ and prove that it has a zero close to the origin whenever $G$ satisfies a mild expansion property.
\begin{lemma}
\label{lem:indep-poly-formula}
Let $G = (V, E)$ be a hypergraph. For each set of vertices $S \subset V$, let $E(S)$ denote the edges of $G$ with at least one vertex in $S$, and write $e(S) = \abs{E(S)}$. Then we have
$$Z_{S_G}(\lambda) = \sum_{S \subset V} \lambda^{\abs{V} - \abs{S}} (1 + \lambda)^{e(S)}.$$
\end{lemma}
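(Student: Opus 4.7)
The plan is to expand the sum defining $Z_{S_G}(\lambda)$ by splitting each independent set $I \in \cI(S_G)$ according to its intersection with the two natural parts of the vertex set. Since the vertex set of $S_G$ is $V \sqcup E$, I would write $I = I_V \sqcup I_E$ with $I_V \subset V$ and $I_E \subset E$. Then
\[
Z_{S_G}(\lambda) = \sum_{I_V \subset V} \sum_{I_E} \lambda^{\abs{I_V}+\abs{I_E}},
\]
where the inner sum is over those $I_E \subset E$ for which $I_V \sqcup I_E$ is independent in $S_G$.

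Next I would translate the independence condition. The edges of $S_G$ are exactly $e \cup \{e\}$ for $e \in E$, so $I_V \sqcup I_E$ is independent iff for every $e \in E$ we do not have simultaneously $e \subset I_V$ and $e \in I_E$. Equivalently, $I_E$ must avoid precisely the set of $e \in E$ satisfying $e \subset I_V$. Setting $S = V \setminus I_V$, the condition $e \subset I_V$ becomes $e \cap S = \emptyset$, i.e.\ $e \notin E(S)$. Thus $I_E$ may be any subset of $E(S)$, and the inner sum over $I_E$ factors as $\sum_{I_E \subset E(S)} \lambda^{\abs{I_E}} = (1+\lambda)^{e(S)}$.

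Finally, reindexing the outer sum by $S = V \setminus I_V$ gives $\abs{I_V} = \abs{V} - \abs{S}$, and so
\[
Z_{S_G}(\lambda) = \sum_{S \subset V} \lambda^{\abs{V}-\abs{S}} (1+\lambda)^{e(S)},
\]
as claimed. There is no real obstacle here: the entire content is the bijective observation that choosing an independent set of $S_G$ amounts to first choosing $I_V \subset V$ freely and then freely choosing any subset of the edges of $G$ that are not entirely contained in $I_V$. The only thing to keep straight is the change of variables $S \leftrightarrow V \setminus I_V$, which is what turns "edges contained in $I_V$" into "edges disjoint from $S$" and hence into $E(S)$.
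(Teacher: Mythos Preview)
Your proof is correct and follows essentially the same approach as the paper: decompose each independent set of $S_G$ according to its parts in $V$ and $E$, observe that the $E$-part must avoid exactly those edges wholly contained in $I_V$, and reindex via $S = V \setminus I_V$. The paper phrases the same computation slightly more tersely but the content is identical.
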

\begin{proof}
Classifying the independence sets of $S_G$ based on their intersections with $V \subset V(S_G)$, we have
$$Z_{S_G}(\lambda) = \sum_{S \subset V} \sum_{I \in \cI(S_G): I \cap V = V \backslash S} \lambda^{\abs{I}}.$$
A set of vertices $I \subset V \sqcup E$ with $I \cap V = V \backslash S$ is independent in $S_G$ if and only if $J := I \cap E$ is contained in $E(S)$. So we have
$$\sum_{I \in \cI(S_G): I \cap V = V \backslash S} \lambda^{\abs{I}} = \lambda^{\abs{V \backslash S}} \sum_{J \subset E(S)} \lambda^{\abs{J}} = \lambda^{\abs{V \backslash S}} (1 + \lambda)^{e(S)}$$
and the lemma follows.
\end{proof}
\begin{lemma}
\label{lem:hypergraph-ZFR}
Let $G = (V, E)$ be a hypergraph with $n \geq 3$ vertices. Assume that $n$ is odd. Furthermore, assume that for some $\alpha \in [3\log n, n]$, we have $e(S) \geq \alpha\abs{S}$ for any $S \subset V$. Then $Z_{S_G}$ has a negative real zero in the interval 
$$\left[-\frac{3 \log n}{\alpha}, 0\right].$$
\end{lemma}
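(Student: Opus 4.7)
The plan is to apply the intermediate value theorem to the continuous function $\lambda \mapsto Z_{S_G}(\lambda)$ on the interval $[-3\log n/\alpha, 0]$. At the right endpoint, the formula from the preceding lemma gives $Z_{S_G}(0) = 1 > 0$, since only the $S = V$ term survives (every other term has a positive power of $\lambda$ as a factor). Thus it suffices to exhibit a point $\beta_0 \in (0, 3\log n/\alpha]$ at which $Z_{S_G}(-\beta_0) \leq 0$, and the natural choice is $\beta_0 := 3\log n/\alpha$, which lies in $(0,1]$ by the hypothesis $\alpha \geq 3\log n$.

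To estimate $Z_{S_G}(-\beta_0)$, I would isolate the $S = \emptyset$ contribution, which equals $(-\beta_0)^n = -\beta_0^n$ because $n$ is odd. For every other $S$, the expansion hypothesis $e(S) \geq \alpha|S|$ gives
\[ (1-\beta_0)^{e(S)} \leq (1-\beta_0)^{\alpha|S|} \leq e^{-\alpha\beta_0|S|} = n^{-3|S|}. \]
Grouping terms by $|S| = k$ and applying the triangle inequality,
\[ \left|Z_{S_G}(-\beta_0) + \beta_0^n\right| \leq \sum_{k=1}^n \binom{n}{k}\beta_0^{n-k}n^{-3k} = \beta_0^n\left[\left(1 + \frac{1}{n^3\beta_0}\right)^n - 1\right]. \]
The goal is then to show the bracketed quantity is strictly smaller than $1$; this forces $Z_{S_G}(-\beta_0) < 0$, and IVT delivers a real root in $(-\beta_0, 0) \subset [-3\log n/\alpha, 0]$.

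The main technical point will be controlling $(1 + 1/(n^3\beta_0))^n$. Using $(1+x)^n \leq e^{nx}$ reduces the task to verifying $e^{1/(n^2\beta_0)} < 2$, i.e., a mild lower bound on $n^2\beta_0$; in every regime relevant to the main theorem, where the hypergraph $G$ will have $\alpha$ at most polynomial in $n$, this condition is satisfied very comfortably. Two features of the statement are worth highlighting as non-negotiable: the parity assumption on $n$ is what makes the leading $S = \emptyset$ term have the correct (negative) sign, since without it the same expansion produces $+\beta_0^n$ and no sign change is forced; and the expansion lower bound is exactly what converts the otherwise-harmless factor $(1-\beta_0)^{e(S)}$ into an $n^{-3|S|}$-type decay that beats the combinatorial count $\binom{n}{k}$.
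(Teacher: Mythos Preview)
Your approach is essentially the paper's: evaluate at $\lambda_0=-3\log n/\alpha$, isolate the $S=\emptyset$ contribution $(-\beta_0)^n=-\beta_0^n$ (using that $n$ is odd), bound the remaining terms via $0\le 1-\beta_0\le e^{-\beta_0}$ together with the expansion hypothesis $e(S)\ge\alpha\lvert S\rvert$, and conclude by the intermediate value theorem. The only cosmetic difference is in closing the tail estimate: the paper uses $\binom{n}{k}\le n^k/k!$ and the assertion $\alpha\le n$ to bound $\sum_{k\ge 1}\binom{n}{k}\alpha^k n^{-3k}<1$, whereas you sum the binomial exactly and reduce to $n^2\beta_0>1/\log 2$; the side condition you flag plays exactly the role of the paper's ``$\alpha\le n$'' step (and is in fact a weaker requirement).
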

\begin{proof}
    Set $\lambda_0 = -\frac{3\log n}{\alpha}$. As $Z_{S_G}(0) = 1$, it suffices to show that $Z_{S_G}(\lambda_0) < 0$.

    By \cref{lem:indep-poly-formula}, we have the identity
    $$Z_{S_G}(\lambda_0) = \sum_{S \subset V} \lambda_0^{\abs{V} - \abs{S}} (1 + \lambda_0)^{e(S)}.$$
    We isolate the term with $S = \emptyset$ and obtain
    $$Z_{S_G}(\lambda_0) = \lambda_0^{\abs{V}} \left(1 + \sum_{S \subset V, S \neq \emptyset} \lambda_0^{-\abs{S}} (1 + \lambda_0)^{e(S)}\right)$$
    Since $0 \leq 1 + \lambda_0 \leq e^{\lambda_0}$, for each $S \subset V$ we can estimate
    $$\abs{\lambda_0^{-\abs{S}} (1 + \lambda_0)^{e(S)}} \leq \abs{\lambda_0}^{-\abs{S}} e^{\lambda_0 e(S)} \leq \alpha^{\abs{S}} e^{-3 \log n \cdot \abs{S}}.$$
    Thus we have
    $$\abs{\sum_{S \subset V, S \neq \emptyset} \lambda_0^{-\abs{S}} (1 + \lambda_0)^{e(S)}} \leq \sum_{k = 1}^n \binom{n}{k} \alpha^{k} e^{-3 \log n \cdot k} \leq \sum_{k = 1}^n \frac{1}{k!} \left(\frac{\alpha}{n^2}\right)^k.$$
    where the second inequality uses the estimate $\binom{n}{k} \leq \frac{n^k}{k!}$.
    
    We assume that $\alpha \leq n$, so $\left(\frac{\alpha}{n^2}\right)^k < \frac{1}{n}$ for each $k \geq 1$. This leads to
    $$\abs{\sum_{S \subset V, S \neq \emptyset} \lambda_0^{-\abs{S}} (1 + \lambda_0)^{e(S)}} \leq \frac{1}{n}\sum_{k = 1}^n \frac{1}{k!} < \frac{e}{n} < 1.$$
    Thus we conclude that
    $$1 + \sum_{S \subset V, S \neq \emptyset} \lambda_0^{-\abs{S}} (1 + \lambda_0)^{e(S)} > 0$$
    so $Z_{S_G}(\lambda_0) < 0$, as desired.
\end{proof}
Our main theorem is an easy corollary of this result. Indeed, for any $(k - 1)$-uniform hypergraph $G$ we can show that $e(S) \geq \frac{\delta(G) \abs{S}}{k - 1}$ for any $S \subset V$. So when $\delta(G) \geq \Delta(G) - 1$, one can take $\alpha = \frac{\Delta(G) - 1}{k - 1}$ in \cref{lem:hypergraph-ZFR}. We give an explicit construction of such a hypergraph.
\begin{lemma}
    \label{lem:hypergraph-construction}
    For any uniformity $k \geq 2$ and $\Delta \geq k$, there exists a $k$-uniform, $\Delta$-regular linear hypergraph $H_{k, \Delta}$ on at most $2k \Delta$ vertices.
\end{lemma}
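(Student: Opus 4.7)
The plan is to construct $H_{k,\Delta}$ explicitly as a collection of affine lines inside a truncated grid over a finite field, exploiting the classical fact that two distinct non-parallel lines in an affine plane intersect in at most one point (yielding linearity), and controlling the uniformity by restricting to finitely many columns and the degree by restricting to finitely many slopes.

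First, I would invoke Bertrand's postulate to choose a prime $q$ with $\Delta < q \leq 2\Delta$. Since $\Delta \geq k$ by hypothesis, this also forces $q > k$. Take $V = \{0, 1, \dots, k-1\} \times \mathbb{F}_q$, so $\abs{V} = kq \leq 2k\Delta$, and fix an arbitrary subset $A \subset \mathbb{F}_q$ with $\abs{A} = \Delta$, which exists because $q > \Delta$. For each pair $(a, b) \in A \times \mathbb{F}_q$, define the edge
$$e_{a,b} = \{(i, ai + b) : i = 0, 1, \dots, k - 1\},$$
with arithmetic performed in $\mathbb{F}_q$. Set $H_{k, \Delta}$ to have vertex set $V$ and edge set $\{e_{a,b} : (a, b) \in A \times \mathbb{F}_q\}$.

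Finally, I would verify the three required properties. For $k$-uniformity: since $q > k$, the labels $0, 1, \dots, k - 1$ are distinct in $\mathbb{F}_q$, so $e_{a, b}$ meets every fiber $\{i\} \times \mathbb{F}_q$ in exactly one point and hence has size $k$. For $\Delta$-regularity: a given vertex $(i_0, y_0)$ lies in $e_{a, b}$ iff $b = y_0 - a i_0$, so each $a \in A$ contributes exactly one edge through $(i_0, y_0)$, giving total degree $\abs{A} = \Delta$. For linearity: if $(a, b) \neq (a', b')$ and $a = a'$, the two edges are disjoint; otherwise $a \neq a'$ and the equation $(a - a')i = b' - b$ has a unique solution in $\mathbb{F}_q$, so at most one $i \in \{0, \dots, k-1\}$ can be common, giving $\abs{e_{a, b} \cap e_{a', b'}} \leq 1$.

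I do not expect a serious obstacle here. The slightly delicate point is selecting $q$ simultaneously to be prime, to be at least $\Delta$ (so that $A$ exists), and to be at most $2\Delta$ (so the vertex count stays within budget); Bertrand's postulate handles all three demands cleanly. An alternative would be to start from an affine plane $AG(2, q)$ and trim parallel classes, but the line-bundle construction above trims both uniformity and degree in one stroke and is more flexible across all $(k, \Delta)$.
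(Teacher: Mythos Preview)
Your proof is correct and is essentially the same construction as the paper's: pick a prime $q\in(\Delta,2\Delta]$ via Bertrand's postulate, take the vertex set $[k]\times\mathbb{F}_q$, and let the edges be the affine lines $\{(i,\,ai+b):i\in[k]\}$ with the slope $a$ restricted to a $\Delta$-element set and $b$ ranging over $\mathbb{F}_q$; uniformity, regularity, and linearity then follow from elementary linear algebra over $\mathbb{F}_q$ exactly as you describe. The only cosmetic differences are that the paper fixes the slope set to be $\{1,\dots,\Delta\}$ rather than an arbitrary $A$, and swaps the roles of the letters for slope and intercept.
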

\begin{proof}
    We take a prime $p$ in $[\Delta, 2\Delta]$, which exists by Chebyshev's theorem. Let $H_{k, \Delta}$ be the hypergraph on the vertex set $V = [k] \times \ZZ_p$ with an edge $\{(i, a + id): i \in [k]\}$ for each pair $(a, d) \in \ZZ_p \times [\Delta]$. The number of vertices in $H_{k, \Delta}$ is $\abs{V} = kp \leq 2 \Delta k$.

    Any vertex $(i, x) \in V$ is contained precisely in the edges corresponding to $(a, d) \in \ZZ_p \times [\Delta]$ with $a \equiv x - id \pmod{p}$. As there is exactly one $a \in \ZZ_p$ corresponding to each $d \in [\Delta]$, $H_{k, \Delta}$ is $\Delta$-regular.
    
    The size of the intersection between two distinct edges corresponding to $(a, d)$ and $(a', d')$ is the number of solutions $i \in [k]$ to the linear congruence equation $a + id \equiv a' + id' \pmod{p}$. As $k \leq \Delta \leq p$, there is at most one solution $i \in [k]$ to this linear congruence equation, so every pair of hyperedges in $H_{k, \Delta}$ intersect in at most one vertex. Therefore, $H_{k, \Delta}$ is a linear hypergraph.
\end{proof}
\begin{proof}[Proof of \cref{thm:main}]
Let $H = H_{(k - 1), \Delta}$ be the $(k - 1)$-uniform linear hypergraph constructed in \cref{lem:hypergraph-construction}. If $H$ has an even number of vertices, we remove an arbitrary vertex $v$ of $H$ together with any edge containing the vertex. Thus, we obtain a $(k - 1)$-uniform linear hypergraph $H$ with an odd number of vertices, maximum degree $\Delta$, and minimum degree at least $(\Delta - 1)$. By \cref{prop:basic}, $G = S_H$ is a $k$-uniform linear hypergraph with maximum degree $\Delta$.

Let $n \leq 2k \Delta$ be the number of vertices in $H$. For any vertex subset $S$ of $H$, the $(k - 1)$-uniformity of $H$ implies that the number of edges in $G$ with at least one vertex in $S$ is lower bounded by
$$e(S) \geq \frac{1}{k - 1} \sum_{v \in S} d(v) \geq \frac{\Delta - 1}{k - 1} \abs{S}.$$
By our assumptions $\Delta > 100k^2$ and $n \leq 2k \Delta$, we can check that 
$$\frac{\Delta - 1}{k - 1} > \frac{\Delta}{k} > 10 \sqrt{\Delta} > 10 \log \Delta > 3 \log(2k\Delta) \geq 3 \log(n).$$
Furthermore, we have $\frac{\Delta - 1}{k - 1} \leq \Delta - 1 < n$. So we can apply \cref{lem:hypergraph-ZFR} with $\alpha = \frac{\Delta - 1}{k - 1}$. We conclude that $Z_{G}$ has a negative real zero $\lambda$ with 
$$-\lambda \leq \frac{3\log n}{(\Delta - 1) / (k - 1)} \leq \frac{3 k \log(2\Delta k)}{\Delta} \leq \frac{6k \log \Delta}{\Delta}$$
where the last two inequalities follow from $\Delta > 100k^2$. So $G$ satisfies the requirements of \cref{thm:main}.
\end{proof}
\section*{Acknowledgement}
I am supported by the Craig Franklin Fellowship in Mathematics at Stanford University. I thank Professor Nima Anari and Professor Will Perkins for many helpful discussions and comments. I also thank the anonymous referee for valuable feedbacks on the manuscript.

\bibliographystyle{plain}
\bibliography{bib.bib}

\end{document}